\newcommand{\del}{\partial}
\renewcommand{\(}{\left(}
\renewcommand{\)}{\right)}
\newcommand{\arcsinh}{\mathop{\mathrm{arcsinh}}}
\newcommand{\R}{\mathbf{R}}
\renewcommand{\S}{\mathbf{S}}
\newcommand{\sgn}{\mathop{\mathrm{sgn}}}
\newcommand{\TLO}{T_{\scriptscriptstyle{LO}}}
\newcommand{\THI}{T_{\scriptscriptstyle{HI}}}
\newcommand{\TMAX}{T_{\scriptscriptstyle{MAX}}}
\font\bbb=msbm10
\renewcommand{\Re}{\hbox{\bbb\char82}}
\title{Computing Minimum Time Paths With Bounded Acceleration}
\author{Stewart D. Johnson\thanks{Department of Mathematics and Statistics, Williams College, Williamstown, MA, {\tt sjohnson@williams.edu}}}
\begin{document}
\maketitle

\begin{abstract}
  Solving for the minimum time bounded acceleration trajectory with prescribed position and velocity at endpoints is a highly nonlinear problem. The methods and bounds developed in this paper distinguish when there is a continuous acceleration solution and reduce the problem of computing the optimal trajectory to a search over two parameters, planar rotation $-\pi/2<\theta<\pi/2$ and spatiotemporal dilation $0<\alpha<\Lambda(\theta)$.
\end{abstract}

\begin{keywords}
minimal time paths, bounded acceleration, bilinear tangent law
\end{keywords}

\begin{AMS}
49M05, 49M37, 34K35, 34K28, 65K05
\end{AMS}

\section{Introduction}

We seek better understanding of and numerical methods for computing time-minimizing planar trajectories $(x(t),y(t))$ that have bounded acceleration $|(x'',y'')|\le 1$. A variety of boundary conditions can be considered. In this work we assume position and velocity are fully specified at initial and terminal points of the trajectory.

The problem of actually computing time minimizing trajectories has many difficulties. Continuous, bang-bang, and constant acceleration minimizers are all possible and can occur in close proximity to one another. Minimum time may depend discontinuously on boundary conditions, although the dependence is lower semi-continuous with constant acceleration solutions at points of discontinuity \cite{morgan}.

Optimizing using Pontryagin's principle yields a well-known stationarity condition (often called the bilinear tangent law). The stationarity condition is not sufficient, and multiple non-optimal solutions may exist. It is also possible to have local, not global, minimizers. In \cite{morgan} there is an example of boundary conditions with a continuum of stationary solutions containing a non-optimal local minimizer.

Even if it is known that the stationarity condition has a unique continuous acceleration solution, numerically approximating the solution is multidimensional and highly nonlinear \cite{feng}.

This work addresses these difficulties with the following contributions. First, we give necessary and sufficient conditions to determine when we have constant, bang-bang, or continuously varying acceleration solutions. Secondly, in the case of continuously varying acceleration, we reduce the numerical problem of computing the trajectory to a search of two continuous parameters over a semi-bounded region.

Minimizing time under constraints on acceleration is an interesting problem on its own \cite{bhat,aneesh,feng}, but also shows up in kinodynamic motion planning when acceleration is the dominant constraint, as in cases of limited traction \cite{marko}. The techniques in this work were applied to calculate the fastest path around the bases on a baseball diamond in \cite{morgan}, which garnered popular attention via National Public Radio, Huffington Post, Live Science, Science News, and Math Goes Pop.

\section{Time Minimizing Paths}

A time minimizing planar trajectory $(x(t),y(t))$ with bounded acceleration $|(x'',y'')|\le 1$, must satisfy \cite{morgan}: \begin{equation}
(x'',y'') = {At+B \over |At+B|}  \ \ \ \ A,B\in \Re^2 \label{eqn:morgan}
\end{equation}
on any open segment of the trajectory that is not restricted by boundary conditions.

This form subsumes the classic bilinear tangent law \cite{bryson,lewis}, and also contains bang-bang and constant acceleration solutions by setting $B=(0,0)$. Note that by rescaling space we can assume any positive bound on the magnitude of acceleration.

Assuming $B\not= (0,0)$ in (\ref{eqn:morgan}), then a rotation, spatiotemporal dilation, reflection, and time shift can transform acceleration to the form $(1,t)/\sqrt{1+t^2}$. Specifically, let
\begin{equation}
\begin{array}{lcl}
f''(t) = 1/\sqrt{1+t^2} &\;& g''(t) = t/\sqrt{1+t^2}\cr
f'(t)=\arcsinh(t) &\;& g'(t)= \sqrt{1+t^2} \cr
f(t)=t \arcsinh(t)-\sqrt{1+t^2} &\;& g(t)=  {1\over 2}\( t \sqrt{1+t^2}+\arcsinh(t)\) \cr
\end{array}
\label{eqn:fg}
\end{equation}
Then it follows:
\begin{proposition}
If $(\tilde x(t),\tilde y(t))$ is a minimal time path with unit magnitude continuous acceleration for $-\epsilon<t<\epsilon$ then there exist unique values for $\alpha>0$, $\theta\in(-\pi/2,\pi/2)$, $\sigma,\eta=\pm 1$, and $t_0$ such that
$$ \(\matrix{  x(t) \cr  y(t)}\) = {1\over\alpha^2}\R_\theta \S
\(\matrix{ f(\alpha (t-t_0))+u_0 \alpha (t-t_0) \cr  g(\alpha (t-t_0))+v_0 \alpha (t-t_0) }\)$$
for $-\epsilon<t<\epsilon$, with rotations and reflections
$$\R_\theta = \left[\matrix{ \cos(\theta) & -\sin(\theta) \cr \sin(\theta) & \cos(\theta)}\right] \qquad
\S = \left[\matrix{ \sigma & 0 \cr 0 & \eta }\right]
$$
\label{prop:form}
\end{proposition}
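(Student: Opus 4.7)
By (\ref{eqn:morgan}) the acceleration has the form $(\tilde x'',\tilde y'')=(At+B)/|At+B|$ for some $A,B\in\Re^2$. Continuity of acceleration on $(-\epsilon,\epsilon)$ rules out the collinear cases: if $A=0$ the acceleration is constant (forcing $\alpha\to 0$, so the proposed form degenerates), and if $B$ is a scalar multiple of $A$ (including $B=0$) then $(At+B)/|At+B|$ is a sign function jumping at $At+B=0$. Hence I may assume $A$ and $B$ are linearly independent.

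The core step is to produce, uniquely, $M>0$, $\alpha>0$, $t_0\in\Re$, $\theta\in(-\pi/2,\pi/2)$, $\sigma,\eta=\pm 1$ satisfying $At+B = M\,\R_\theta\S\,(1,\,\alpha(t-t_0))^T$ as an identity of affine functions of $t$. Matching the coefficient of $t$ gives $A=M\alpha\,\R_\theta\S(0,1)^T$, so $M\alpha=|A|$ and the unit vector $\R_\theta\S(0,1)^T=(-\eta\sin\theta,\eta\cos\theta)^T$ equals $A/|A|$. Since $\cos\theta>0$ on $(-\pi/2,\pi/2)$, this forces $\eta=\sgn(A_2)$ and then determines $\theta$ uniquely from the direction of $A$. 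The constant term reads $B = M\,\R_\theta\S(1,0)^T - M\alpha t_0\,\R_\theta\S(0,1)^T$; since $\R_\theta\S(1,0)^T\perp\R_\theta\S(0,1)^T$, projecting $B$ onto $A/|A|$ recovers $t_0=-(B\cdot A)/|A|^2$, while the perpendicular part fixes $M$ in magnitude (hence $\alpha=|A|/M$) and $\sigma$ in sign. This delivers unique $(\alpha,\theta,\sigma,\eta,t_0)$.

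With $\tau=\alpha(t-t_0)$ the acceleration reads $\R_\theta\S(f''(\tau),g''(\tau))^T$ by (\ref{eqn:fg}). Integrating twice in $t$, using $d\tau=\alpha\,dt$, yields velocity $(1/\alpha)\R_\theta\S(f'(\tau)+u_0,g'(\tau)+v_0)^T$ and then position $(1/\alpha^2)\R_\theta\S(f(\tau)+u_0\tau,g(\tau)+v_0\tau)^T$, plus a constant vector of spatial integration absorbed into the choice of origin for $(x,y)$; here $u_0,v_0$ are the velocity integration constants passed through the orthogonal matrix $\R_\theta\S$, uniquely fixed by the initial velocity. This matches the claimed formula. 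The delicate point I anticipate is the uniqueness bookkeeping for the $O(2)$-parametrization $\R_\theta\S$ with $\theta\in(-\pi/2,\pi/2)$ and $\sigma,\eta\in\{\pm 1\}$: it is one-to-one on a generic locus but degenerates when $A$ points along $\pm(1,0)$ (forcing $\theta=\pm\pi/2$, excluded from the open interval), so the uniqueness claim implicitly requires that such boundary directions be handled by an orientation convention or a non-degeneracy hypothesis.
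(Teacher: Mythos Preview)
Your proof plan is correct and follows precisely the approach the paper sketches: the paper does not give a detailed proof but simply asserts that the proposition follows from transforming the acceleration $(At+B)/|At+B|$ to $(1,t)/\sqrt{1+t^2}$ via a rotation, spatiotemporal dilation, reflection, and time shift, which is exactly the matching $At+B=M\,\R_\theta\S(1,\alpha(t-t_0))^T$ you carry out explicitly. Your observation about the degenerate case $A_2=0$ forcing $\theta=\pm\pi/2$ (outside the open interval) is a valid catch of a minor imprecision in the proposition's statement rather than a defect in your argument; the paper's later remark that this parametrization ``uniquely covers all possibilities by a $180^\circ$ sweep'' glosses over the same measure-zero boundary.
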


This formulation of the solution preserves time direction, and uniquely covers all possibilities by a $180^\circ$ sweep of space together with vertical and horizontal flips.

A variety of boundary conditions can be considered. This work focuses on taking position and velocity specified at initial and terminal locations:
\begin{equation}
\begin{array}{rcl}
\(\matrix{  u_1 \\  v_1 }\)&=&\(\matrix{  x'(0) \\  y'(0)}\)  \\[16pt]
\(\matrix{  u_2 \\  v_2 }\)&=&\(\matrix{  x'(T) \\  y'(T)}\)   \\[16pt]
\(\matrix{ \delta_{ x} \\ \delta_{ y} }\)&=& \(\matrix{  x(T)- x(0) \\  y(T)- y(0)}\),
\end{array}
\label{eqn:bndry}
\end{equation}
where $T$ is free and minimized subject to $|(x'',y'')|\le 1$

There is a unique time-minimizing trajectory: existence is established by bounded Lipschitz convergence, and uniqueness follows in that averaging the acceleration of two solutions must also be a solution with maximal acceleration (see \cite{morgan} for details).

Bang-bang or constant solutions happen when $B=(0,0)$ in (\ref{eqn:morgan}) and don't fit the formulation in proposition \ref{prop:form}. A complete characterization of boundary values when bang-bang and constant solutions exist is given in the following due to Frank Morgan.

Fitting (\ref{eqn:morgan}) to any bang-bang or constant acceleration solution has $B=(0,0)$ and $A$ is parallel to the difference in endpoint velocities. The problem is computationally simplified by rotating the plane so the difference in endpoint velocities is horizontal.
\begin{proposition}[F. Morgan]
Assuming $v_1=v_2=v$ in (\ref{eqn:bndry}), the minimum time constant acceleration problem has bang-bang or constant acceleration solutions in precisely the following three cases:
\begin{enumerate}
\item  $u_2=u_1$ and $(\delta_x, \delta_y)$ is nonzero,
\item  $\delta_y = v =0$ and $\delta_x \not =  |u_2^2-u_1^2|/2$,
\item  $v\not = 0$, $u_1\not=u_2$, and $\delta_x =\delta_y (u_1+u_2)/2v \,\pm\, ((\delta_y/v)^2 - (u_2-u_1)^2)/4$.
\end{enumerate}
\label{prop:morgan}
\end{proposition}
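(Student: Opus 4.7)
The plan is to enumerate all bang-bang and constant-acceleration trajectories satisfying the boundary data (\ref{eqn:bndry}) and then invoke uniqueness of the minimum-time solution to match Morgan's three cases. Any such trajectory corresponds to $B = (0,0)$ in (\ref{eqn:morgan}), so the acceleration takes the form $\sigma(t)\hat A$ with $\hat A = A/|A|$ a unit direction and $\sigma(t) = \pm 1$ switching at most once. Letting $t_1, t_2 \ge 0$ denote the durations on each sign, and writing $T = t_1 + t_2$ and $D = t_1 - t_2$, direct integration yields
$$\Delta v = D\hat A, \qquad \Delta x = v_1 T + \hat A \left[ TD/2 + (T^2 - D^2)/4 \right],$$
with pure constant acceleration corresponding to $|D| = T$.

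Next I would split on the alignment of $\Delta v = (u_2 - u_1, 0)$ with $\hat A$. When $u_1 \ne u_2$, the direction is pinned to $\hat A = (\epsilon, 0)$ for some $\epsilon = \pm 1$ with $D = \epsilon(u_2 - u_1)$; when $u_1 = u_2$, we need $D = 0$ with $\hat A$ an arbitrary unit vector (together with the degenerate zero-acceleration case). In the branch $u_1 = u_2$, the equation $(\delta_x - Tu_1, \delta_y - Tv) = (T^2/4)\hat A$ with $|\hat A| = 1$ squares to a quartic in $T$ that is negative at $T = 0$ precisely when $(\delta_x, \delta_y)\ne(0,0)$ and positive for large $T$, producing Case 1. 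In the branch $u_1 \ne u_2$, $v \ne 0$, the $y$-component forces $T = \delta_y/v$ and substitution into the $x$-component recovers the formula of Case 3 (with the $\pm$ arising from $\epsilon$). In the branch $u_1\ne u_2$, $v = 0$, $\delta_y = vT = 0$ is forced and $T \ge |u_2 - u_1|$ becomes a free parameter, giving Case 2.

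For the converse and optimality, the bang-bang/constant trajectory constructed in each case is feasible, and by uniqueness of the minimum-time optimum \cite{morgan} it must equal that optimum provided no continuous-acceleration competitor beats it. When none of the three cases obtains, the parameter matching above admits no bang-bang/constant solution, so the unique optimum is necessarily a continuously rotating trajectory of Proposition \ref{prop:form}.

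The hardest step is Case 2: $T$ is a free positive parameter and both $\epsilon = \pm 1$ may contribute candidates, so one must delineate the attainable set of $\delta_x$ and verify that the excluded boundary $\delta_x = |u_2^2 - u_1^2|/2$ is precisely the degeneracy where the switching time collapses to an endpoint and the candidate meets a constant-acceleration limit outside the Proposition \ref{prop:form} parametrization. Cases 1 and 3 then follow mechanically from the quartic in $T$ and from the substituted $x$-formula, respectively.
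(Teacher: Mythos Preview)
Your enumeration of feasible bang-bang/constant trajectories via $T=t_1+t_2$, $D=t_1-t_2$ is equivalent to the paper's computation with $T_1,T_2$, and it correctly recovers the algebraic formulas in the three cases. The genuine gap is in the optimality direction. You write that the constructed trajectory ``must equal that optimum provided no continuous-acceleration competitor beats it,'' but ruling out such competitors is precisely the content of the converse, and you have not done it. Feasibility of a bang-bang candidate together with uniqueness of the minimizer does not make that candidate optimal; it only says that \emph{if} it is optimal then it is the unique optimum.

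The paper closes this gap (for Case~3, the substantive case) with a dimensional-reduction argument that your outline is missing. Since $v_1=v_2$ forces $\hat A=(\pm 1,0)$, the constructed bang-bang/constant trajectory is the unique time minimizer for the \emph{horizontal coordinate taken alone}: the standard one-dimensional double-integrator problem with endpoint data $x'(0)=u_1$, $x'(T)=u_2$, $x(T)-x(0)=\delta_x$. Any competing two-dimensional trajectory with a nonzero vertical acceleration component has $|x''(t)|<1$ on a set of positive measure, and hence takes strictly longer to satisfy those horizontal boundary conditions. That single observation is what promotes the feasible bang-bang candidate to the global optimum. Without it, your argument establishes only that a bang-bang/constant trajectory meeting the boundary data exists in each listed case, not that the minimum-time solution is of that type.
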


\begin{proof}
If the initial and final velocities are equal, $u_2=u_1$, and $\delta_x=\delta_y=0$ then total time is zero. If $(\delta_x,\delta_y)$ is non-zero, it is straightforward to construct a bang-bang solution with acceleration reversing direction at the halfway point $(\delta_x/2,\delta_y/2)$.  If $v=0$ it is again a straightforward exercise to construct a bang-bang or constant acceleration solution.

Henceforth we assume initial and final velocities are different and $v\not=0$.

Assume that the solution is bang-bang, with $(x'',y'')=(1,0)$ for time $T_1\ge0$ and
then $(x'',y'')=(-1,0)$ for time $T_2\ge 0$. Then compute
$$
\begin{array}{rcl}
\delta_x &=& T_1(u_1+T_1/2) + T_2(u_2+T_2/2), \\
\delta_y/v &=& T_1 + T_2, \\
u_2-u_1 &=& T_1 - T_2. \\
\end{array}
$$
Solving this system of equations and allowing for the reversed order of acceleration yields (3). Note that $((y/v)^2 - (u_2-u_1)^2)=0$ iff  $T_1$ or $T_2$ is zero, yielding a constant acceleration solution.

Conversely, if a solution has $\delta_x =\delta_y (u_1+u_2)/2v \pm ((\delta_y/v)^2 - (u_2-u_1)^2)/4$, then there is a bang-bang or constant solution with acceleration of the form $(\pm 1,0)$. This is the unique minimizer for the horizontal dimension of the problem: $x'(0)=u_1$, $x'(T)=u_2$, $x(T)-x(0)=\delta_x$. Allowing any vertical component to acceleration would reduce the magnitude of horizontal acceleration, and so would take more time.

\end{proof}

The bang-bang and constant acceleration solutions are thus completely characterized and straightforward to calculate. However, computing solutions in the continuous acceleration case is a highly nonlinear problem. Using the above formulation, the problem can be reduced to a search over two continuous parameters (rotation and dilation) and one discrete (vertical flip). This is a significant improvement over other proposed methods \cite{feng,aneesh,marko}. The method is outlined here, and detailed in the remainder of the paper.

Continuing with the assumption $v_1=v_2=v$, we can reflect about the $y$-axis to assume $u_2-u_1>0$ and rescale space and time (see section \ref{sec:norm}) so that $u_2-u_1=1$. This produces the normalized boundary values:
\begin{equation}
\begin{array}{ccc}
\(\matrix{  u \\  v }\)&=&\(\matrix{  x'(t_1) \\  y'(t_1)}\)  \\[16pt]
\(\matrix{  u+1 \\  v }\)&=&\(\matrix{  x'(t_2) \\  y'(t_2)}\)  \\[16pt]
\(\matrix{ \delta_{ x} \\ \delta_{ y} }\)&=& \(\matrix{  x(t_2)- x(t_1) \\  y(t_2)- y(t_1)}\),
\end{array}
\label{eqn:normbndry}
\end{equation}
where $t_1>t_2$ are free and $t_2-t_1$ is minimized under unit magnitude acceleration. Proposition \ref{prop:form} implies the existence of a solution of the form
\begin{equation}
\(\matrix{  x(t) \cr  y(t)}\) = {1\over\alpha^2}\R_\theta \S
\(\matrix{ f(\alpha t)+u_0 \alpha t \cr  g(\alpha t)+v_0 \alpha t }\)
\label{eq:difvel}
\end{equation}
With $x'(t_2)-x'(t_1)=1$ and $y'(t_2)-y'(t_1)=0$ we get
\begin{equation}
\(\matrix{  \alpha\cos(\theta) \cr  \alpha\sin(\theta)} \) = \left[\matrix{\sigma & 0 \cr 0 & \eta}\right]
\(\matrix{ f'(\alpha t_2)-f'(\alpha t_2) \cr  g'(\alpha t_2)-g'(\alpha t_1) }\)
\label{eqn:difval}
\end{equation}
For $-\pi/2<\theta<\pi/2$ and $f'$ monotone increasing we must have horizontal orientation $\sigma=+1$. Given any $\alpha>0$, $\theta\in(-\pi/2,\pi/2)$, and vertical orientation $\eta=\pm1$, equations (\ref{eqn:difval}) can be rapidly solved to any precision. This is shown in section \ref{sec:time} where we first solve for $\tau_1=\alpha t_1$ as a root of a monotone function with initial upper bounds $\TLO<\tau_1<\THI$, after which we get $t_2$, $u_0$, and $v_0$ by direct computation.

Solving (\ref{eqn:normbndry}) is thus reduced to a search over $\theta\in(-\pi/2,\pi/2)$, $\alpha>0$, and $\eta=\pm1$ to match the displacements $\delta_x$, $\delta_y$.

We can also get an upper bound for $\alpha$.
\begin{definition} For $T>1$ and $-\pi/2<\theta<\pi/2$, let
\begin{equation}
\Lambda(T,\theta)= \max\{\alpha>0\; \big|\; \alpha T  > \exp(\alpha\cos\theta/2)-1
\label{eqn:alphamaxUP}
\end{equation}
\end{definition}

Values for $\Lambda$ can be rapidly calculated. The following is established in section \ref{sec:time}
\begin{proposition}
If $t_1,t_2,u_0,v_0,\alpha,\theta$ solve the boundary conditions (\ref{eqn:normbndry}), then $\alpha<\Lambda(t_2-t_1,\theta)$
\label{prop:lamb}
\end{proposition}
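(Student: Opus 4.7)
The plan is to recast $\alpha<\Lambda(t_2-t_1,\theta)$ as a concrete lower bound on $\alpha(t_2-t_1)$, and then extract that bound directly from (\ref{eqn:difval}) by means of an AM--GM estimate.

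First I would observe that $F(\alpha):=\alpha T-(\exp(\alpha\cos\theta/2)-1)$ is concave on $(0,\infty)$ (linear minus convex), vanishes at $\alpha=0$, and has positive initial slope $T-\tfrac12\cos\theta$ (since in the continuous-acceleration regime $T=t_2-t_1>1>\tfrac12\cos\theta$, the lower bound $T>1$ coming from the normalization $u_2-u_1=1$ together with $|(x'',y'')|\le 1$ and strict since the equality case is bang-bang). Hence $F>0$ on an interval $(0,\Lambda(T,\theta))$ and $F<0$ beyond it, so the conclusion $\alpha<\Lambda$ is equivalent to the strict inequality
\[
\alpha(t_2-t_1)\;>\;\exp(\alpha\cos\theta/2)-1.
\]

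Next I would exploit the horizontal component of (\ref{eqn:difval}) (with $\sigma=+1$), which reads $\alpha\cos\theta=\arcsinh(\alpha t_2)-\arcsinh(\alpha t_1)$. Writing $\tau_i:=\alpha t_i$ and $p_i:=\tau_i+\sqrt{1+\tau_i^2}$, the identity $\arcsinh(\tau)=\ln(\tau+\sqrt{1+\tau^2})$ gives
\[
\exp(\alpha\cos\theta/2)\;=\;\sqrt{p_2/p_1}\;=:\;\sqrt{r},\qquad r>1,
\]
where $r>1$ because $\alpha\cos\theta>0$. The defining relation $p_i^2-2\tau_i p_i=1$ yields $\tau_i=(p_i-1/p_i)/2$, and substituting $p_2=p_1 r$ a short computation produces
\[
\tau_2-\tau_1\;=\;(r-1)\Bigl(\tfrac{p_1}{2}+\tfrac{1}{2p_1 r}\Bigr).
\]

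Finally, applying AM--GM to the bracket gives $\tfrac{p_1}{2}+\tfrac{1}{2p_1 r}\ge 1/\sqrt{r}$, and therefore
\[
\alpha(t_2-t_1)\;=\;\tau_2-\tau_1\;\ge\;\frac{r-1}{\sqrt{r}}\;=\;\sqrt{r}-\tfrac{1}{\sqrt{r}}\;>\;\sqrt{r}-1\;=\;\exp(\alpha\cos\theta/2)-1,
\]
which is exactly $F(\alpha)>0$, i.e.\ $\alpha<\Lambda(t_2-t_1,\theta)$. The only point requiring care is that AM--GM is itself sharp (attained at $p_1=1/\sqrt{r}$), so the required \emph{strict} inequality must be recovered from the elementary gap $1/\sqrt{r}<1$, which holds precisely because $r>1$. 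Beyond this accounting, I anticipate no real obstacle; the only external input is the bound $T>1$ in the continuous-acceleration case, which is automatic under the normalization $u_2-u_1=1$.
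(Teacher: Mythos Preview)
Your argument is correct. Both your proof and the paper's reduce the claim to the inequality $\tau_2-\tau_1>\exp(\alpha\cos\theta/2)-1$ (with $\tau_i=\alpha t_i$), and in fact both pass through the sharper intermediate bound $\tau_2-\tau_1\ge 2\sinh(\alpha\cos\theta/2)$ before throwing away the $e^{-\alpha\cos\theta/2}$ term; so the logical skeleton is identical. The difference is in how that intermediate bound is obtained: the paper argues by calculus that $\arcsinh(\tau+\delta)-\arcsinh(\tau-\delta)$ is maximized at $\tau=0$, giving $\alpha\cos\theta\le 2\arcsinh((\tau_2-\tau_1)/2)$, whereas you substitute $p_i=\tau_i+\sqrt{1+\tau_i^2}$ and use AM--GM on $\tfrac{p_1}{2}+\tfrac{1}{2p_1 r}$. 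Your route is purely algebraic and self-contained (no derivative computation needed), and it makes the equality case transparent ($p_1=1/\sqrt{r}$, i.e.\ $\tau_1=-\tau_2$); the paper's route is perhaps more conceptual, since the maximization statement is just evenness plus concavity of $\arcsinh$. Either way the strictness comes from the same trivial gap $e^{-\alpha\cos\theta/2}<1$.
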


An upper bound for $\alpha$ results from using an a priori upper bound $\TMAX$ to total time $t_1-t_2$. Such a bound can be constructed from a zigzag trajectory with two zero velocity points (see section \ref{sec:dilate}).

Normalization is carefully defined in the next section, and the above bounds and propositions are developed in section \ref{sec:num}.

\section{Normalization}\label{sec:norm}
\subsection{Transformations}
Suppose $(\tilde x (\tilde t), \tilde y (\tilde t))$ is a minimal time curve with
\begin{equation}
\begin{array}{rcl}
\(\matrix{ \tilde x'(\tilde t_1) \\ \tilde y'(\tilde t_1)}\) = \(\matrix{ \tilde u_1 \\ \tilde v_1 }\) \\[16pt]
\(\matrix{ \tilde x'(\tilde t_2) \\ \tilde y'(\tilde t_2)}\) = \(\matrix{ \tilde u_2 \\ \tilde v_2 }\) \\[16pt]
\(\matrix{ \tilde x(\tilde t_2)-\tilde x(\tilde t_1) \\ \tilde y(\tilde t_2)-\tilde y(\tilde t_1)}\) = \(\matrix{ \delta_{\tilde x} \\ \delta_{\tilde y} }\) \end{array}
\end{equation}
and acceleration
$$\sqrt{\tilde x''(\tilde t)^2+\tilde y''(\tilde t)^2}=1$$

Then given any rotation angle $\phi$, dilation $\beta>0$, reflections $\tilde\sigma,\tilde\eta=\pm 1$, and transforming
\begin{equation}
\(\matrix{ x(t) \cr y(t) }\) = {1\over\beta^2} \R_\phi \tilde\S \(\matrix{ \tilde x(\beta t) \cr \tilde y(\beta t) }\)
\end{equation}
yields a minimal time path $(x(t),y(t))$ that satisfies boundary conditions
\begin{equation}
\begin{array}{rcl}
t_1 &=& \tilde t_1/\beta\\
t_2 &=& \tilde t_2/\beta\\[10pt]
\(\matrix{ x'(t_1) \\  y'(t_1)}\) &=& {1\over\beta}\R_\phi \tilde\S\(\matrix{ \tilde u_1 \\ \tilde v_1 }\) \\[16pt]
\(\matrix{ x'(t_2) \\  y'(t_2)}\) &=& {1\over\beta}\R_\phi \tilde\S\(\matrix{ \tilde u_2 \\ \tilde v_2 }\) \\[16pt]
\(\matrix{ x(t_2)- x(t_1) \\ y(t_2)- y(t_1)}\) &=& {1\over\beta^2}  \R_\phi\tilde\S\(\matrix{ \delta_{\tilde x} \\ \delta_{\tilde y} }\) \end{array}
\end{equation}
and acceleration
$$\sqrt{ x''(t)^2+ y''(t)^2}=1$$

\subsection{The Normalized Problem}
Given real-world boundary values $\tilde u_1$, $\tilde v_1$, $\tilde u_2$, $\tilde v_2$, $\delta_{\tilde x}$, $\delta_{\tilde y}$, let
$$
\begin{array}{rcl}
\beta  &=& \((\tilde u_2-\tilde u_1)^2+(\tilde v_2-\tilde v_1)^2\)^{1/2}\\
\phi   &=& \arctan(\sigma {\tilde v_2- \tilde v_1 \over \tilde u_2 - \tilde u_1})\in(-{\pi\over 2},{\pi\over 2})\\
\sigma &=& \sgn(\tilde u_2-\tilde u_1)\\
\eta   &=& 1
\end{array}
$$
Applying the linear transformation ${1\over\beta^2} \R_\phi \tilde\S$ to the $\tilde x, \tilde y$ system and scaling time $\tilde t=\beta t$ yields boundary values
$$
\begin{array}{rclcl}
{1\over\beta} \R_\phi \tilde\S \(\matrix{ \tilde u_1 \\ \tilde v_1 }\) &=&   \(\matrix{ u_1 \\ v_1 }\) \\[16pt]
{1\over\beta} \R_\phi \tilde\S \(\matrix{ \tilde u_2 \\ \tilde v_2 }\) &=&   \(\matrix{ u_2 \\ v_2 }\)
&=& \(\matrix{ u_1+1 \\ v_1 }\) \\[16pt]
{1\over\beta^2} \R_\phi \tilde\S \(\matrix{ \delta_{\tilde x} \\ \delta_{\tilde y} }\) &=&   \(\matrix{ \delta_{x} \\ \delta_{y} }\) \\[10pt]
\end{array}
$$
with unit acceleration.

Note that a problem with $(u_1,v_1)$ equal to $(u_2,v_2)$ cannot be normalized. In this case the solution is bang-bang (proposition \ref{prop:morgan}).

The normalized problem is thus to estimate the minimal time path $(x(t),y(t))$ with unit acceleration and boundary conditions determined by $u_1,v_1,\delta_x,\delta_y$:
\begin{equation}
\begin{array}{rcl}
\(\matrix{  x'(t_1) \\  y'(t_1)}\) &=& \(\matrix{  u_1 \\  v_1 }\) \\[16pt]
\(\matrix{  x'(t_2) \\  y'(t_2)}\) &=&  \(\matrix{  u_1+1 \\  v_1 }\)  \\[16pt]
\(\matrix{  x(t_2)- x(t_1) \\  y(t_2)- y(t_1)}\) &=& \(\matrix{ \delta_{ x} \\ \delta_{ y} }\) \end{array}
\end{equation}

Given a solution $(x(t),y(t))$ for $t_1<t<t_2$ to the normalized problem, we transform back to original coordinates $(\tilde x (\tilde t),\tilde y(\tilde t))$ as
\begin{equation}
\(\matrix{ \tilde x(\tilde t) \cr  \tilde y(\tilde t)}\) = \beta^2\tilde\S\R_\phi^{-1}\(\matrix{  x(\tilde t/\beta) \cr  y(\tilde t/\beta }\)
\end{equation}
for
$$\beta t_1=\tilde t_1<\tilde t<\tilde t_2=\beta t_2$$

\section{Numerics}\label{sec:num}
\subsection{Solving the Normalized Problem}
To solve the normalized problem, numerical methods are developed to calculate values for six parameters
$$
\begin{array}{ll}
\hbox{Normalized Time Interval:} & [\tau_1,\tau_2]\\
\hbox{Velocity Translation:} & (u_0,v_0)\\
\hbox{Time/Space Dilation:} & \alpha>0 \\
\hbox{Vertical Reflection:}  & \eta = \pm 1\\
\hbox{Rotation Angle:}     & -{\pi\over 2}<\theta<{\pi\over 2}.
\end{array}
$$
to satisfy six constraint equations
\begin{eqnarray}
{\alpha}\R_\theta^{-1}\( \matrix{ u_1 \cr v_1 }\) &=&
\(\matrix{  f'(\tau_1)+u_0 \cr \eta g'(\tau_1)+ v_0}\)\label{eqn:s1}\\[10pt]
{\alpha}\R_\theta^{-1}\( \matrix{ u_2 \cr v_2 }\) &=&
\(\matrix{  f'(\tau_2)+u_0 \cr \eta g'(\tau_2)+v_0}\)\label{eqn:s2}\\[10pt]
{\alpha^2}\R_\theta^{-1}\( \matrix{ \delta_x \cr \delta_y }\) &=&
\(\matrix{  f(\tau_2)-f(\tau_1) +u_0 (\tau_2-\tau_1) \cr \eta g(\tau_2)-\eta g(\tau_1) +v_0 (\tau_2 - \tau_1) }\)\label{eqn:s3}
\end{eqnarray}
for given boundary conditions $u_1,v_1,u_2,v_2,\delta_x,\delta_y$, with
\begin{equation}
\begin{array}{rcl}
u_2-u_1&=&1\\
v_2-v_1&=&0
\end{array}
\label{eqn:uvnorm}
\end{equation}

Subtracting equation (\ref{eqn:s1}) from (\ref{eqn:s2}) and using (\ref{eqn:uvnorm}) yields
\begin{equation}
\(\matrix{f'(\tau_2)-f'(\tau_1)\cr g'(\tau_2)-g'(\tau_1)}\) = \alpha \R_\theta^{-1} \(\matrix{ 1\cr 0}\) = \(\matrix{ \alpha\,\cos\theta \cr -\alpha\,\sin\theta}\)\label{eqn:deluv}
\end{equation}
which defines a map $(\alpha,\theta)\mapsto(\tau_1,\tau_2)$ that is independent of all boundary conditions. Given $\alpha,\theta$ this map can be quickly solved to arbitrary precision as shown in section \ref{sec:time}. For multiple calculations of the same precision, an interpolated hash table may be used.

Using boundary velocity conditions, integration constants $u_0, v_0$ follow from (\ref{eqn:s1}) (or (\ref{eqn:s2})), and we thus get a map $(\theta,\alpha)\mapsto (\mu_x,\mu_y)$ as
$$ \( \matrix{ \mu_x \cr \mu_y }\) = \(\matrix{  f(\tau_2)-f(\tau_1) +u_0 (\tau_2-\tau_1) \cr g(\tau_2)-g(\tau_1) +v_0 (\tau_2 - \tau_1) }\)$$
Then the minimal time solution is specified by finding the correct $(\theta, \alpha)$ to match the computed displacement $(\mu_x, \mu_y)$ to the target displacement $(\delta_x,\delta_y)$.

\subsection{Bounds on Dilation}\label{sec:dilate}
Fix $\theta\in(-\pi/2,\pi/2)$, and let
$$\( \matrix{ \rho_u \cr \rho_v }\)=\R_\theta^{-1}\( \matrix{ 1 \cr 0 }\) = \( \matrix{ \cos \theta \cr -\sin \theta }\)$$
so that $\rho_u>0$

Then (\ref{eqn:deluv}) yields
\begin{eqnarray}
\alpha\rho_u &=& \arcsinh(\tau_2)-\arcsinh(\tau_1)\label{eqn:asnh}\\[10pt]
\alpha\rho_v &=& \sqrt{1+\tau_2^{\,2}} - \sqrt{1+\tau_1^{\,2}}\label{eqn:vee}
\end{eqnarray}
Note that (\ref{eqn:asnh}) and $\alpha\rho_u>0$ make $\tau_2-\tau_1>0$. Recall that
\begin{equation}
\arcsinh(z)=\ln\(z+\sqrt{1+z^{\,2}}\)\label{eqn:lnsnh}
\end{equation}
Three readily verifiable bounds will be useful:
\begin{equation}
|z|<\sqrt{1+z^2}<|z|+1 \label{eqn:veebnd}
\end{equation}
\begin{equation}
z+|z| < z+\sqrt{1+z^2} \label{eqn:hbnd}
\end{equation}
and if $z>0$ then
\begin{equation}
z+\sqrt{1+z^2}<1+2z \label{eqn:zposhbnd}
\end{equation}

The next lemma follows from (\ref{eqn:vee}) and (\ref{eqn:veebnd}).
\begin{lemma}
$$\alpha\rho_v - 1 < |\tau_2| - |\tau_1| < \alpha\rho_v + 1$$
\label{clm:diffbound}
\end{lemma}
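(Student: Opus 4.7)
The plan is essentially to unwind the two ingredients the lemma is built from: the exact identity (\ref{eqn:vee}), which reads $\alpha\rho_v = \sqrt{1+\tau_2^{\,2}}-\sqrt{1+\tau_1^{\,2}}$, and the double inequality (\ref{eqn:veebnd}), which says $|z|<\sqrt{1+z^{\,2}}<|z|+1$ for all real $z$. The bounds claimed in the lemma are symmetric two-sided estimates on $|\tau_2|-|\tau_1|$ centered at $\alpha\rho_v$, so I would prove each inequality by applying (\ref{eqn:veebnd}) once to $\tau_2$ and once to $\tau_1$, matching the signs so that the error terms add rather than cancel.

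For the upper bound $|\tau_2|-|\tau_1|<\alpha\rho_v+1$, I would use $\sqrt{1+\tau_2^{\,2}}>|\tau_2|$ together with $\sqrt{1+\tau_1^{\,2}}<|\tau_1|+1$. Subtracting and applying (\ref{eqn:vee}) gives
\begin{equation*}
\alpha\rho_v \;=\; \sqrt{1+\tau_2^{\,2}}-\sqrt{1+\tau_1^{\,2}} \;>\; |\tau_2|-(|\tau_1|+1) \;=\; |\tau_2|-|\tau_1|-1,
\end{equation*}
which rearranges to the desired upper bound. For the lower bound $\alpha\rho_v-1<|\tau_2|-|\tau_1|$, I would use the opposite pairing: $\sqrt{1+\tau_2^{\,2}}<|\tau_2|+1$ and $\sqrt{1+\tau_1^{\,2}}>|\tau_1|$. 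Subtracting yields
\begin{equation*}
\alpha\rho_v \;=\; \sqrt{1+\tau_2^{\,2}}-\sqrt{1+\tau_1^{\,2}} \;<\; (|\tau_2|+1)-|\tau_1| \;=\; |\tau_2|-|\tau_1|+1,
\end{equation*}
which rearranges to the desired lower bound. Combining the two gives the lemma.

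There is no real obstacle here: the argument is a two-line application of (\ref{eqn:veebnd}) to the identity (\ref{eqn:vee}). The only point worth flagging is that the inequalities $|z|<\sqrt{1+z^{\,2}}<|z|+1$ are strict for all $z$ (the left being strict because $1>0$, the right because $z^{\,2}<z^{\,2}+2|z|+1$ unless $|z|+1=0$, which is impossible), so the resulting bounds on $|\tau_2|-|\tau_1|$ are also strict, matching the statement.
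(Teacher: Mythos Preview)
Your proposal is correct and matches the paper's own justification exactly: the paper simply states that the lemma follows from (\ref{eqn:vee}) and (\ref{eqn:veebnd}), and your argument is precisely the two-line unwinding of that claim.
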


Proposition \ref{prop:lamb} is a corollary of the following lemma:
\begin{lemma}
$$\tau_2-\tau_1 > \exp(\alpha\rho_u/2)-1$$
\label{clm:sumbound}
\end{lemma}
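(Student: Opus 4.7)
The plan is to convert the log-difference identity $\alpha\rho_u = \arcsinh(\tau_2)-\arcsinh(\tau_1)$ into a multiplicative statement using (\ref{eqn:lnsnh}), then exploit the reciprocal identity $(\tau + \sqrt{1+\tau^2})(\sqrt{1+\tau^2}-\tau) = 1$ so that both $\tau_1$ and $\tau_2$ can be expressed in terms of a single positive quantity and the ratio $\exp(\alpha\rho_u)$.

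Concretely, I would set $a = \tau_1 + \sqrt{1+\tau_1^{\,2}} > 0$ and $E = \exp(\alpha\rho_u/2)$. Since $\rho_u = \cos\theta > 0$ and $\alpha > 0$ we have $E > 1$. From (\ref{eqn:asnh}) and (\ref{eqn:lnsnh}) it follows that $\tau_2 + \sqrt{1+\tau_2^{\,2}} = aE^2$. The reciprocal identity then gives $\sqrt{1+\tau_1^{\,2}} - \tau_1 = 1/a$ and $\sqrt{1+\tau_2^{\,2}} - \tau_2 = 1/(aE^2)$, so that $2\tau_1 = a - 1/a$ and $2\tau_2 = aE^2 - 1/(aE^2)$. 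A short rearrangement gives
$$2(\tau_2 - \tau_1) = (E^2 - 1)\left(a + \frac{1}{aE^2}\right).$$

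The main step is then the estimate of $a + 1/(aE^2)$. By AM-GM this is at least $2/E$, independent of $a$. Plugging in,
$$\tau_2 - \tau_1 \ge \frac{(E^2-1)}{E} = E - \frac{1}{E} > E - 1,$$
where the last inequality is strict because $E > 1$ forces $1/E < 1$. This is exactly $\tau_2 - \tau_1 > \exp(\alpha\rho_u/2) - 1$.

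The only subtlety I anticipate is handling the case distinction on the signs of $\tau_1$ and $\tau_2$: the substitution $a = \tau_1 + \sqrt{1+\tau_1^{\,2}}$ works uniformly for all real $\tau_1$ (since $\sqrt{1+\tau_1^{\,2}} > |\tau_1|$ ensures $a > 0$), which is what makes the AM-GM step apply without splitting cases. Proposition \ref{prop:lamb} then follows immediately by observing that the lemma forces $\alpha T > \exp(\alpha \cos\theta/2) - 1$ with $T = t_2 - t_1 = (\tau_2 - \tau_1)/\alpha$, putting $\alpha$ in the defining set of $\Lambda(T,\theta)$ from (\ref{eqn:alphamaxUP}).
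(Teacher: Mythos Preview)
Your argument is correct. The substitution $a=\tau_1+\sqrt{1+\tau_1^{\,2}}=e^{\arcsinh\tau_1}$ and $E=e^{\alpha\rho_u/2}$ gives the clean factorization $2(\tau_2-\tau_1)=(E^2-1)\bigl(a+\tfrac{1}{aE^2}\bigr)$, and AM--GM on the bracket yields $\tau_2-\tau_1\ge E-\tfrac{1}{E}>E-1$; the strictness in the final step is exactly what is needed, and your remark that $a>0$ for all real $\tau_1$ correctly disposes of any sign cases.

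The paper proceeds differently: it observes (by a derivative computation) that $\arcsinh(\tau+\delta)-\arcsinh(\tau-\delta)$ is maximized over $\tau$ at $\tau=0$, so with $\delta=(\tau_2-\tau_1)/2$ one gets $\alpha\rho_u\le 2\arcsinh\!\bigl((\tau_2-\tau_1)/2\bigr)$, and then invokes the bound $z+\sqrt{1+z^2}<1+2z$ for $z>0$ to conclude $\alpha\rho_u<2\ln(1+\tau_2-\tau_1)$. The two routes are in fact the same inequality in disguise---your AM--GM equality case $a=1/E$ is precisely the paper's extremal configuration $\tau_1=-\tau_2$---but the packaging differs: the paper uses a calculus maximization plus the prepared inequality (\ref{eqn:zposhbnd}), whereas you stay purely algebraic via the exponential parametrization and AM--GM. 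Your version has the minor advantage of making the sharp intermediate bound $\tau_2-\tau_1\ge 2\sinh(\alpha\rho_u/2)$ explicit and avoiding any appeal to differentiation.
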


\begin{proof}
From (\ref{eqn:asnh}),
$$
\alpha\rho_u = \arcsinh(\tau_2)-\arcsinh(\tau_1)
$$
An exercise of calculus demonstrates that for $\delta>0$ the maximum of $\arcsinh(\tau+\delta)-\arcsinh(\tau-\delta)$ is realized at $\tau=0$, hence
\begin{eqnarray*}
\alpha\rho_u &=& \arcsinh(\tau_2)-\arcsinh(\tau_1)\\
&\le& \arcsinh\((\tau_2-\tau_1)/ 2\)-\arcsinh\((\tau_2-\tau_1)/ 2\)\\
&=& 2\arcsinh\((\tau_2-\tau_1)/ 2\)\\
&\le& 2\ln(1+\tau_2-\tau_1)
\end{eqnarray*}
with the last step following from (\ref{eqn:lnsnh}) and (\ref{eqn:zposhbnd}).

\end{proof}

An a priori upper bound for $\tau_2-\tau_1$ comprised of three straight line segments joined at points of zero velocity. It takes a minimum of $|(u_1,v_1)|$ time units to bring initial velocity down to zero, and a minimum of $|(u_2,v_2)|$ to build up to final velocity from zero velocity. Connecting the two points of zero velocity with a straight bang-bang trajectory produces:
$$
\begin{array}{c}
\mu_1 = |(u_1,v_1)| \qquad \mu_2 = |(u_2,v_2)| \\
\TMAX = \mu_1 + \mu_2 +\sqrt{2}\((2\delta_x-\mu_1 u_1-\mu_2 u_2)^2+(2\delta_y-\mu_1 v_1-\mu_2 v_2)^2\)^{1/4}
\end{array}
$$
Thus the desired $\theta,\alpha$ solution will satisfy $\alpha<\Lambda(\TMAX,\theta)$.

\subsection{Solving for Time} \label{sec:time}

Fix $\theta\in(-\pi/2,\pi/2),\alpha>0$, and let
$$\( \matrix{ \mu_u \cr \mu_v }\)=\alpha \R_\theta^{-1}\( \matrix{ 1 \cr 0 }\)= \(\matrix{ \alpha\,\cos\theta \cr -\alpha\,\sin\theta}\)
$$
Then constraint (\ref{eqn:deluv}) becomes:
\begin{equation}
\( \matrix{ \mu_u \cr \mu_v }\)=\(\matrix{\arcsinh(\tau_2)-\arcsinh(\tau_1)\cr \sqrt{1+\tau_2^{\,2}}-\sqrt{1+\tau_1^{\,2}}}\)\label{eqn:mu}
\end{equation}
Note that $\mu_v>0$ implies $\tau_2>\tau_1$.

For simplicity, take $G(\tau)=g'(\tau)=\sqrt{1+\tau^2}$. With
\begin{equation}
\tau_2=\sinh(\arcsinh(\tau_1)+\mu_u),
\label{eqn:tau2}
\end{equation}
equation (\ref{eqn:mu}) reduces to
\begin{equation}
\mu_v= G(\sinh(\arcsinh(\tau_1)+\mu_u))-G(\tau_1)
\label{eqn:tau1}
\end{equation}

\begin{lemma} Fix $\mu_u>0$, then
\begin{equation}
\mu_v=G(\sinh(\arcsinh(\tau)+\mu_u))-G(\tau)\label{eqn:mono}
\end{equation}
is monotone in $\tau$, with $\mu_v\to\infty$ as $\tau\to\infty$ and $\mu_v\to-\infty$ as $\tau\to-\infty$.
\end{lemma}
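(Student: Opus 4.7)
The plan is to differentiate $\mu_v$ with respect to $\tau$ and show the derivative is strictly positive, then analyze the behavior at $\pm\infty$.

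First I would compute $d\tau_2/d\tau$ where $\tau_2 = \sinh(\arcsinh(\tau) + \mu_u)$. By the chain rule and the identity $\cosh(\arcsinh(z)) = \sqrt{1+z^2} = G(z)$, this gives $d\tau_2/d\tau = G(\tau_2)/G(\tau)$. Since $G'(\tau) = \tau/G(\tau)$, the derivative of (\ref{eqn:mono}) telescopes nicely:
$$\frac{d\mu_v}{d\tau} = \frac{\tau_2}{G(\tau_2)}\cdot\frac{G(\tau_2)}{G(\tau)} - \frac{\tau}{G(\tau)} = \frac{\tau_2-\tau}{G(\tau)}.$$
Because $\mu_u > 0$ and $\arcsinh$ is strictly increasing, $\tau_2 > \tau$, so $d\mu_v/d\tau > 0$ and $\mu_v$ is strictly monotone increasing in $\tau$.

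For the limits, I would rewrite $\mu_v$ in closed form using the hyperbolic addition formulas. Writing $\tau = \sinh(s)$ so $G(\tau) = \cosh(s)$ and $\tau_2 = \sinh(s+\mu_u)$, the identity $\cosh(s+\mu_u) = \cosh(s)\cosh(\mu_u) + \sinh(s)\sinh(\mu_u)$ gives
$$\mu_v = G(\tau_2) - G(\tau) = (\cosh(\mu_u)-1)\sqrt{1+\tau^2} + \sinh(\mu_u)\,\tau.$$
As $\tau\to +\infty$, $\sqrt{1+\tau^2}\sim \tau$, so $\mu_v \sim (\cosh(\mu_u)+\sinh(\mu_u)-1)\tau = (e^{\mu_u}-1)\tau \to +\infty$, and as $\tau\to -\infty$, $\sqrt{1+\tau^2}\sim -\tau$, so $\mu_v \sim (1-e^{-\mu_u})\tau\to -\infty$. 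Both coefficients are strictly positive because $\mu_u > 0$.

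No step here looks genuinely obstructive; the only subtlety is organizing the bookkeeping so the derivative collapses to $(\tau_2-\tau)/G(\tau)$ cleanly rather than becoming a tangle of square roots. The closed form obtained from the addition formulas could alternatively be used to re-derive monotonicity directly (noting $d\mu_v/d\tau = (\cosh(\mu_u)-1)\tau/\sqrt{1+\tau^2} + \sinh(\mu_u) \geq \sinh(\mu_u) - (\cosh(\mu_u)-1) = 1 - e^{-\mu_u} > 0$), giving a self-contained proof without reference to $\tau_2$.
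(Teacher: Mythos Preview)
Your proof is correct and considerably cleaner than the paper's. The paper establishes monotonicity indirectly: it computes the second partial derivative $\partial^2/\partial\delta^2$ of $G(\sinh(\arcsinh(\tau)+\delta))$, shows it is positive (convexity in the shift $\delta$), and then leverages that to deduce $\partial\mu_v/\partial\tau>0$ through a chain of inequalities involving $\sinh'$ and $\arcsinh'$; it never explicitly treats the limits at $\pm\infty$. Your direct computation collapses $d\mu_v/d\tau$ to $(\tau_2-\tau)/G(\tau)$ in two lines, and the closed form $(\cosh\mu_u-1)\sqrt{1+\tau^2}+\sinh(\mu_u)\,\tau$ obtained from the hyperbolic addition formula makes the asymptotics---and, as you note, the alternative uniform lower bound $d\mu_v/d\tau\ge 1-e^{-\mu_u}$---immediate. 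The paper's convexity-in-$\delta$ argument is more roundabout but does encode how the map depends on $\mu_u$; your approach is shorter, self-contained, and actually proves the full statement including the limits.
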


\begin{proof}
Computing:
\begin{eqnarray*}
\lefteqn{{\del^2\over \del \delta^2} G(\sinh(\arcsinh(\tau)+\delta))}\\
&=&{4\cosh(2\delta+2\arcsinh \tau)+\cosh(4\delta+4\arcsinh \tau)+3\over (2\cosh(2\delta+2\arcsinh \tau)+2)^{3/2} }\\
&>&0
\end{eqnarray*}
hence for $\mu_u>0$,
$$ \left.{\del\over\del\delta} G(\sinh(\arcsinh(\tau)+\delta))\right|_{\delta=0}^{\mu_u}>0$$
making
$$
\begin{array}{c}
G'(\sinh(\arcsinh(\tau)+\delta))\sinh'(\arcsinh(\tau)+\delta)\qquad\qquad
\\ \qquad\qquad -G'(\sinh(\arcsinh(\tau)))\sinh'(\arcsinh(\tau))>0
\end{array}
$$
and
$$ G'(\sinh(\arcsinh(\tau)+\delta)){\sinh'(\arcsinh(\tau)+\delta)\over \sinh'(\arcsinh(\tau))} -G'(\tau)>0$$
thus
$${\del\mu_u\over\del \tau} = G'(\sinh(\arcsinh(\tau)+\delta))\sinh'(\arcsinh(\tau)+\delta){\arcsinh}'(\tau)-G'(\tau)>0$$

\end{proof}

As a corollary, we have:
\begin{lemma}
For any given $(\mu_u,\mu_v)$ with $\mu_u > 0$ there is a unique solution $(\tau_1,\tau_2)$ to (\ref{eqn:mu}).
\end{lemma}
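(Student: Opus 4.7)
The plan is to deduce this existence-and-uniqueness statement directly from the monotonicity lemma immediately preceding it. The strategy is to decouple system (\ref{eqn:mu}) into a one-variable equation for $\tau_1$ (which the previous lemma handles) followed by an explicit formula for $\tau_2$.

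First, I would observe that the top equation of (\ref{eqn:mu}), namely $\mu_u = \arcsinh(\tau_2)-\arcsinh(\tau_1)$, can be solved uniquely for $\tau_2$ in terms of $\tau_1$ because $\arcsinh$ is a bijection $\Re\to\Re$; this is exactly the relation $\tau_2=\sinh(\arcsinh(\tau_1)+\mu_u)$ from (\ref{eqn:tau2}). Substituting into the bottom equation $\mu_v=\sqrt{1+\tau_2^{\,2}}-\sqrt{1+\tau_1^{\,2}}=G(\tau_2)-G(\tau_1)$ gives precisely (\ref{eqn:tau1}), a single scalar equation in $\tau_1$.

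Next, I would invoke the previous lemma: with $\mu_u>0$ fixed, the map
$$\tau\mapsto G(\sinh(\arcsinh(\tau)+\mu_u))-G(\tau)$$
is strictly monotone increasing and surjects onto $\Re$ (its limits are $\pm\infty$). Hence for the given $\mu_v\in\Re$ there exists a unique $\tau_1$ solving (\ref{eqn:tau1}), and defining $\tau_2$ by (\ref{eqn:tau2}) then produces a pair that satisfies both components of (\ref{eqn:mu}) by construction.

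For uniqueness I would run the reduction in reverse: any pair $(\tau_1,\tau_2)$ satisfying (\ref{eqn:mu}) must, by injectivity of $\arcsinh$, obey $\tau_2=\sinh(\arcsinh(\tau_1)+\mu_u)$, and then the second component of (\ref{eqn:mu}) becomes exactly (\ref{eqn:tau1}), whose unique root is the $\tau_1$ identified above. I do not anticipate any real obstacle here since the previous lemma does all the analytic work; the only thing to be careful about is making it explicit that the decoupling uses the invertibility of $\arcsinh$ so that the two-variable system is genuinely equivalent to the scalar equation (\ref{eqn:tau1}).
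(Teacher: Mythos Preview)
Your proposal is correct and is exactly what the paper intends: the lemma is stated there as an immediate corollary of the preceding monotonicity lemma, with the decoupling $\tau_2=\sinh(\arcsinh(\tau_1)+\mu_u)$ already set up in (\ref{eqn:tau2})--(\ref{eqn:tau1}). Your only addition is spelling out the equivalence between the two-variable system and the scalar equation via the bijectivity of $\arcsinh$, which is fine and arguably clearer.
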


The unique solution to (\ref{eqn:mu}) is estimated using bisection with the following bounds to initiate the algorithm.

\begin{lemma} Given $\mu_u>0$, $\mu_v$, define
\begin{equation}
\begin{array}{lcl}
\TLO &=& -e^{\mu_u}\max\{{1\over 2},{1-\mu_v\over\mu_u}\}\\
\THI &=& \max\{0,{1+\mu_v\over\mu_u}\},
\end{array}
\label{eqn:tlothi}
\end{equation}
then the unique solution $\tau$ to equation (\ref{eqn:mono}) satisfies $\TLO<\tau<\THI$
\label{clm:tlothi}
\end{lemma}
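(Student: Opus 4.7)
The plan is to use the reformulation
\[F(\tau) = G(\sinh(\arcsinh(\tau)+\mu_u)) - G(\tau) = \tau\sinh(\mu_u) + \sqrt{1+\tau^2}\,(\cosh(\mu_u)-1),\]
which follows from $\cosh(a+\mu_u) = \cosh(a)\cosh(\mu_u)+\sinh(a)\sinh(\mu_u)$ with $a=\arcsinh(\tau)$. By the monotonicity established in the preceding lemma, it suffices to show $F(\TLO) < \mu_v < F(\THI)$.

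For $\mu_v < F(\THI)$ I would split on the sign of $1+\mu_v$. When $\mu_v \le -1$, $\THI = 0$ and $F(0) = \cosh(\mu_u)-1 \ge 0 > \mu_v$. When $\mu_v > -1$, $\THI = (1+\mu_v)/\mu_u$ with $1+\mu_v > 0$; dropping the nonnegative term $\sqrt{1+\THI^2}(\cosh(\mu_u)-1)$ and using $\sinh(\mu_u)/\mu_u \ge 1$ for $\mu_u>0$ already gives $F(\THI) \ge 1+\mu_v > \mu_v$.

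For $F(\TLO) < \mu_v$ (the harder direction), given $\tau = -c$ with $c > 0$, I would apply $\sqrt{1+c^2} \le c + 1/(2c)$ (a Bernoulli-type estimate from $(c+1/(2c))^2 = 1+c^2+1/(4c^2)$) to get
\[F(-c) \le -c(1-e^{-\mu_u}) + (\cosh(\mu_u)-1)/(2c),\]
then split on which arm realizes the $\max$ in $\TLO$. In the regime $\mu_v \ge 1-\mu_u/2$ where $\TLO = -e^{\mu_u}/2$, substituting $c = e^{\mu_u}/2$ and simplifying reduces $F(\TLO) \le 1-\mu_u/2 \le \mu_v$ to the elementary inequality $\mu_u + e^{-2\mu_u} < e^{\mu_u} + 2e^{-\mu_u}$, immediate from $e^{\mu_u} \ge 1+\mu_u$. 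In the regime $\mu_v < 1-\mu_u/2$ where $\TLO = -e^{\mu_u}(1-\mu_v)/\mu_u$, set $w = 1-\mu_v > \mu_u/2$ and clear denominators to rearrange $F(\TLO) \le \mu_v$ into
\[(\cosh(\mu_u)-1)\mu_u/(2e^{\mu_u}) \le w + w^2(e^{\mu_u}-1-\mu_u)/\mu_u.\]
The right side is strictly increasing in $w > 0$, so it suffices to check at $w = \mu_u/2$, where (using $\cosh(\mu_u)-1 = (e^{\mu_u}-1)^2/(2e^{\mu_u})$) the inequality collapses to $(1-e^{-\mu_u})^2 \le e^{\mu_u}+1-\mu_u$, which holds because the left side is below $1$ and the right side is at least $2$.

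The main obstacle is that the naive bound $\sqrt{1+c^2} < c+1$ is too loose to close the first subcase: for large $\mu_u$ the true value of $F(-e^{\mu_u}/2)$ is on the order of $-e^{\mu_u}/2$, and retaining the $1/(2c)$ correction is necessary to capture the leading-order cancellation between $-c\sinh(\mu_u)$ and $\sqrt{1+c^2}(\cosh(\mu_u)-1)$. The two-arm case split in the definition of $\TLO$ is likewise essential, since $-e^{\mu_u}/2$ binds when $\mu_v$ is near or above $1-\mu_u/2$, while $-e^{\mu_u}(1-\mu_v)/\mu_u$ binds when $\mu_v$ is very negative.
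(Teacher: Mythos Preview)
Your argument is correct and takes a genuinely different route from the paper. The key move is the hyperbolic addition identity
\[
F(\tau)=\cosh(\arcsinh\tau+\mu_u)-\cosh(\arcsinh\tau)=\tau\sinh\mu_u+\sqrt{1+\tau^2}\,(\cosh\mu_u-1),
\]
which reduces the problem to checking $F(\TLO)<\mu_v<F(\THI)$ by direct substitution and elementary estimates (notably $\sqrt{1+c^2}\le c+1/(2c)$ and $e^{\mu_u}\ge 1+\mu_u$). The paper instead reasons about the \emph{solution} pair $(\tau_1,\tau_2)$, splitting into the three sign configurations $\tau_1<\tau_2<0$, $\tau_1<0<\tau_2$, $0<\tau_1<\tau_2$ and combining mean-value estimates for $\sinh$ with the bound $|\,|\tau_2|-|\tau_1|\,-\mu_v|<1$ from Lemma~\ref{clm:diffbound}. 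Your approach is more self-contained: once the closed form for $F$ is written down, no appeal to Lemma~\ref{clm:diffbound} or to the location of $\tau_2$ is needed, and in fact monotonicity of $F$ itself drops out in one line from $F'(\tau)=\sinh\mu_u+\frac{\tau}{\sqrt{1+\tau^2}}(\cosh\mu_u-1)>1-e^{-\mu_u}>0$. The paper's decomposition, on the other hand, explains \emph{why} $\TLO$ and $\THI$ take the particular two-arm form they do: each arm corresponds to a sign regime of $(\tau_1,\tau_2)$, which is less visible in your calculation. A minor stylistic point: several of your ``$\le$'' are actually strict (e.g.\ $\cosh\mu_u-1>0$, $\sinh\mu_u>\mu_u$, $e^{\mu_u}>1+\mu_u$ for $\mu_u>0$), which you need anyway for the strict conclusion $\TLO<\tau<\THI$; it would be cleaner to record them as such.
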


The proof consists of analyzing the three cases $\tau_1<\tau_2<0$, $\tau_1<0<\tau_2$, and $0<\tau_1<\tau_2$, as contained in the following three lemmas. Given $\mu_u>0$, $\mu_v$, let $\tau_1$ be the solution to equation (\ref{eqn:mono}) and $\tau_2=\sinh(\arcsinh(\tau_1)+\mu_u)$.

\begin{lemma}
If $\tau_1<0<\tau_2$ then $-{1\over 2} e^{\mu_u}<\tau_1$
\end{lemma}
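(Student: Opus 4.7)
The plan is to exploit the identity $\mu_u = \arcsinh(\tau_2) - \arcsinh(\tau_1)$ from (\ref{eqn:mu}) together with the sign hypothesis $\tau_1 < 0 < \tau_2$. Since $\arcsinh$ is strictly increasing and vanishes at $0$, the hypothesis $\tau_2 > 0$ gives $\arcsinh(\tau_2) > 0$, so dropping this positive term yields the one-sided bound
$$\mu_u \;>\; -\arcsinh(\tau_1) \;=\; \arcsinh(-\tau_1).$$

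Next I would apply $\sinh$ (monotone) to both sides to obtain $\sinh(\mu_u) > -\tau_1$, and then bound $\sinh(\mu_u) = \tfrac{1}{2}(e^{\mu_u} - e^{-\mu_u}) < \tfrac{1}{2} e^{\mu_u}$, which uses only $e^{-\mu_u} > 0$. Chaining these two inequalities gives $-\tau_1 < \tfrac{1}{2} e^{\mu_u}$, which is the desired conclusion $\tau_1 > -\tfrac{1}{2} e^{\mu_u}$.

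There is no real obstacle here: the argument is three lines, and the only ingredients are monotonicity of $\arcsinh$ and $\sinh$, the hypothesis $\tau_2 > 0$, and positivity of $e^{-\mu_u}$. The slight elegance is that we don't need the $\mu_v$ constraint at all for this particular case — the sign information $\tau_2 > 0$ is already strong enough to absorb one side of the difference in the $\mu_u$ equation. This is consistent with the form of $\TLO$ in (\ref{eqn:tlothi}): the $\tfrac{1}{2}$ branch of the max comes precisely from the case $\tau_1 < 0 < \tau_2$, where $\mu_v$ plays no role, while the other branch $(1-\mu_v)/\mu_u$ will be what governs the remaining case $\tau_1 < \tau_2 < 0$ handled in a subsequent lemma.
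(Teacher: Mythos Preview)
Your proof is correct and is essentially identical to the paper's: both drop the positive term $\arcsinh(\tau_2)$ to get $\mu_u > -\arcsinh(\tau_1)$, then apply $\sinh$ and bound it by $\tfrac{1}{2}e^{\mu_u}$. The only cosmetic difference is that the paper phrases the last step as $-\tfrac{1}{2}e^{\mu_u} < \sinh(-\mu_u) < \tau_1$ rather than working with $-\tau_1$ as you do.
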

\begin{proof}
$$
\begin{array}{rcl}
\mu_u &=& \arcsinh(\tau_2)-\arcsinh(\tau_1)\\
&>& -\arcsinh(\tau_1)
\end{array}
$$
Hence using $-{1\over 2} e^z < \sinh(z)$,
$$-{1\over 2} e^{\mu_u}<\sinh(-\mu_u)<\tau_1 $$

\end{proof}

\begin{lemma}
If $\tau_1<\tau_2<0$ then $\mu_v<0$ and $\tau_1>-e^{\mu_u}{1-\mu_v\over \mu_u}$
\label{clm:belowzero}
\end{lemma}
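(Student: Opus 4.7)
That $\mu_v<0$ is immediate: $\tau_1<\tau_2<0$ forces $\tau_1^{\,2}>\tau_2^{\,2}$, so $\sqrt{1+\tau_2^{\,2}}<\sqrt{1+\tau_1^{\,2}}$ and $\mu_v<0$. For the lower bound on $\tau_1$, I propose the change of variables $s=-\tau_1$, $r=-\tau_2$, so that $s>r>0$. Then the claim $\tau_1>-e^{\mu_u}(1-\mu_v)/\mu_u$ is equivalent to the single inequality
\[
s\,\mu_u \;<\; e^{\mu_u}(1-\mu_v),
\]
which I will establish by splitting $s\mu_u=(s-r)\mu_u+r\mu_u$ and controlling each piece separately.

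The first piece follows from lemma \ref{clm:diffbound}: here $|\tau_2|-|\tau_1|=r-s$, so $\mu_v-1<r-s$ rearranges to $s-r<1-\mu_v$, and multiplying by $\mu_u>0$ gives $(s-r)\mu_u<(1-\mu_v)\mu_u$. The second piece follows from the integral identity derived from (\ref{eqn:asnh}) and (\ref{eqn:vee}):
\[
-\mu_v - r\,\mu_u \;=\; \int_r^s\frac{z-r}{\sqrt{1+z^{\,2}}}\,dz \;>\; 0,
\]
so $r\mu_u<-\mu_v$. Summing the two bounds yields
\[
s\,\mu_u \;<\; (1-\mu_v)\mu_u - \mu_v,
\]
and to close the chain it remains to show $(e^{\mu_u}-\mu_u)(1-\mu_v)>-\mu_v$. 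The elementary inequality $e^x>1+x$ for $x>0$ gives $e^{\mu_u}-\mu_u>1$; combined with $1-\mu_v>-\mu_v$ (i.e.\ $1>0$), multiplication yields $(e^{\mu_u}-\mu_u)(1-\mu_v)>1-\mu_v>-\mu_v$, as required.

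The heart of the argument, and the one step that is not purely bookkeeping, is the auxiliary inequality $r\mu_u<-\mu_v$. The integral presentation above makes it transparent---it simply says that the $1/\sqrt{1+z^{\,2}}$-weighted average of $z$ over $[r,s]$ exceeds the left endpoint $r$---and is in the same spirit as the monotonicity exploited in lemma \ref{clm:sumbound}. An equivalent derivation notes that $\phi(t)=r\arcsinh(t)-\sqrt{1+t^{\,2}}$ has $\phi'(t)=(r-t)/\sqrt{1+t^{\,2}}$, so $\phi$ is uniquely maximized at $t=r$ and $\phi(s)<\phi(r)$ rearranges to $r\mu_u<-\mu_v$. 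Either way, no serious obstacle appears; the bound is tight enough that the slack from lemma \ref{clm:diffbound} plus $e^x>1+x$ is sufficient to conclude.
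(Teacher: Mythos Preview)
Your proof is correct, but it follows a genuinely different route from the paper's.  Both arguments begin the same way: $\mu_v<0$ is immediate, and lemma \ref{clm:diffbound} gives $s-r<1-\mu_v$ (equivalently $\tau_2-\tau_1<1-\mu_v$).  From there the paths diverge.  The paper bounds $\tau_2-\tau_1$ \emph{below} using concavity of $\sinh$ on the negative axis, obtaining $\tau_2-\tau_1>\mu_u\cosh(\arcsinh(\tau_1)+\mu_u)$; it then chains the hyperbolic estimates $\cosh(z)>\tfrac12 e^{-z}$ and $\sinh(u)>-\tfrac12 e^{-u}$ to pass from a lower bound on $\arcsinh(\tau_1)$ to one on $\tau_1$.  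You instead isolate the auxiliary inequality $r\mu_u<-\mu_v$ via the integral $\int_r^s(z-r)/\sqrt{1+z^2}\,dz>0$, then close with pure algebra and the single elementary bound $e^{\mu_u}>1+\mu_u$.  Your approach avoids all the $\cosh/\sinh$ bookkeeping and is arguably cleaner; the paper's approach has the virtue of paralleling exactly the proof of the companion lemma for $0<\tau_1<\tau_2$, where the same concavity/convexity trick gives $\tau_2-\tau_1>\mu_u\cosh(\arcsinh(\tau_1))>\mu_u\tau_1$ directly.
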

\begin{proof} With $\tau_1 = \sinh(\arcsinh(\tau_1))$ and $\tau_2 = \sinh(\arcsinh(\tau_1)+\mu_u)$,
$$\tau_2-\tau_1>\mu_u \left.{d\over dz} \sinh(z)\right|_{\arcsinh(\tau_2)} = \mu_u \cosh(\arcsinh(\tau_1)+\mu_u) $$
Lemma \ref{clm:diffbound} has $\mu_v-1<|\tau_2|-|\tau_1|$, and with $\tau_1<\tau_2<0$,
$$-\mu_v+1>-|\tau_2|+|\tau_1|=\tau_2-\tau_1$$
hence using ${1\over 2}e^{-z}<\cosh(z)$,
$$-\mu_v+1 > \mu_u \cosh\(\arcsinh(\tau_1)+\mu_u\) > {\textstyle{1\over 2}}\mu\exp(-\arcsinh(\tau_1)-\mu_u)$$
and
$$\arcsinh(\tau_1)>-\ln\(2e^{\mu_u}\(1-\mu_v\over\mu_u\)\).$$
Using $\sinh(u)>-{1\over2}e^{-u}$ completes the proof.

\end{proof}

\begin{lemma}
If $0<\tau_1<\tau_2$ then $\tau_1<{\mu_v+1\over\mu_u}$
\end{lemma}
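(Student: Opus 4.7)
The plan is to combine the already-proved displacement bound from Lemma \ref{clm:diffbound} with a one-sided estimate on $\mu_u$ coming from the concavity of $\arcsinh$ on the positive axis.

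First, I would read off from Lemma \ref{clm:diffbound} that, since $0<\tau_1<\tau_2$ makes $|\tau_2|-|\tau_1|=\tau_2-\tau_1$, we have
$$
\tau_2-\tau_1 \;<\; \mu_v+1.
$$
This handles the ``$\mu_v+1$'' part of the desired inequality; what remains is to convert this bound on the spacing $\tau_2-\tau_1$ into a bound on $\tau_1$ itself, using $\mu_u$.

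Second, I would use the defining equation $\mu_u=\arcsinh(\tau_2)-\arcsinh(\tau_1)$ written as
$$
\mu_u \;=\; \int_{\tau_1}^{\tau_2} \frac{ds}{\sqrt{1+s^2}}.
$$
On $[\tau_1,\tau_2]\subset(0,\infty)$ the integrand is a decreasing function of $s$, so it is maximized at the left endpoint, giving
$$
\mu_u \;\le\; \frac{\tau_2-\tau_1}{\sqrt{1+\tau_1^{\,2}}}.
$$
Since $\tau_1>0$ the trivial inequality $\tau_1<\sqrt{1+\tau_1^{\,2}}$ (a special case of (\ref{eqn:veebnd})) then yields
$$
\mu_u\,\tau_1 \;<\; \mu_u\sqrt{1+\tau_1^{\,2}} \;\le\; \tau_2-\tau_1.
$$

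Finally I would chain the two estimates, $\mu_u\,\tau_1 < \tau_2-\tau_1 < \mu_v+1$, and divide by $\mu_u>0$ to conclude $\tau_1<(\mu_v+1)/\mu_u$. No step is really an obstacle; the only thing to be careful about is orienting the concavity inequality the right way (using the decreasing derivative at the \emph{left} endpoint to get an upper, not lower, bound on $\mu_u$), and invoking Lemma \ref{clm:diffbound} with the correct sign convention for $|\tau_2|-|\tau_1|$ in the positive case.
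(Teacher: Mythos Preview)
Your proof is correct and is essentially the same as the paper's: both obtain the key inequality $\mu_u\sqrt{1+\tau_1^{\,2}}\le \tau_2-\tau_1$ (you via the decreasing integrand of $\arcsinh$, the paper via convexity of $\sinh$ at the left endpoint, which are dual ways of stating the same bound), then use $\tau_1<\sqrt{1+\tau_1^{\,2}}$ and Lemma~\ref{clm:diffbound} to finish.
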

\begin{proof}
Similar to the proof of claim \ref{clm:belowzero},
$$\tau_2-\tau_1>\mu_u \left.{d\over dz} \sinh(z)\right|_{\arcsinh(\tau_1)} = \mu_u \cosh(\arcsinh(\tau_1))>\mu_u \tau_1 $$
With $0<\tau_1<\tau_2$, claim \ref{clm:diffbound} implies $\mu_v+1>\tau_2-\tau_1$, and the result follows.

\end{proof}

\section{Conclusion}

Bounds and methods for solving the minimum time bounded acceleration path in the plane subject to velocity and location endpoint conditions are presented in this paper. An example implementation in Python is available from the author.

The methods will apply to other boundary restrictions, such as zero initial velocity, or free endpoint location, and the author would appreciate being informed of any adaptations.

\end{document}